\begin{document}



\section{Introduction}

Occupancy processes \cite{HMP:20} are a class of discrete time Markov chains on $ \{0,1\}^{n} $. This class encompasses  models from diverse areas 
including Hanski's incidence function model \cite{Hanski:94}, which is one of the most important models in metapopulation ecology, contact-based epidemic 
spreading processes \cite{GABHMM:10} and dynamic random graph models \cite{HFX:10}. Furthermore, it was shown in \cite{MH:21} that occupancy 
processes are natural time discretisation for finite spin systems such as contact process, voter model and Ising model \cite{Liggett:05}. 

We define the occupancy process $ (X,\, t\in \mathbb{N}) $ as a discrete time Markov chain on $ \{0,1\}^{n} $ where, conditional on $ X_{t}$, the 
$ X_{i,t+1}, \, i=1,\ldots,n, $ are independent with transition probabilities
\begin{equation}
   \mathbb{P}\left(X_{i,t+1} = 1 \ \middle| \ X_{t} \right)  =  C_{i}(X_{t})\left(1-X_{i,t}\right) +  S_{i}(X_{t})X_{i,t}, \label{Eq:model1}
\end{equation}
where the functions $ C_{i}: \{0,1\}^{n} \to [0,1] $ and $ S_{i}: \{0,1\}^{n} \to [0,1] $ are called the colonisation and survival functions of site $ i $ 
in reference to metapopulation modelling. We interpret $ X_{i,t} =1  $ as  site $ i $ supporting a population at time $ t $, and $ X_{i,t}=0 $ as site $ i $
 not supporting a population at time $ t $.  If site $i$ does not support a population at time $ t $, then the site will be colonised at time $ t+1 $ with 
 probability $ C_{i}(X_{t})$. Similarly, if site $i$ supports a population at time $ t$, the population will survive to time $ t+1 $ with probability $ S_{i}(X_{t})$. 

Although the occupancy process is a finite state Markov chain, the size of the state space usually renders standard analysis intractable. Instead a variety of 
approximations are employed to understand the process's behaviour. Provided the colonisation and survival functions can be extended from $\{0,1\}^{n} $ 
to  $ [0,1]^{n}$, a natural approximation of (\ref{Eq:model1}) is the deterministic process
\begin{equation}
p_{i,t+1} = C_{i}(p_{t})\left(1-p_{i,t}\right) +  S_{i}(p_{t})p_{i,t}, \label{Eq:model2}
\end{equation}
where $ p_{i,0} =  X_{i,0} $. It is known that for suitable sets $ \mathcal{H} \subset \mathbb{R}^{n} $ and assuming all the colonisation and survival 
functions are influenced by a large number of sites, 
\[
\sup_{h \in \mathcal{H}} \left| n^{-1} \sum_{i=1}^{n} h_{i}(X_{i,t} - p_{i,t}) \right|
 \]
is small in probability when $ n $ is large \cite{BMP:15,HMP:20}.

Demonstrating the closeness of paths is not the only way to relate stochastic and deterministic models. Allen \cite{Allen:08} (see also \cite{SK:13}) showed 
that the expectation of the stochastic logistic model is underestimated by its deterministic counterpart. A similar result has been demonstrated for the SIR 
epidemic model \cite{WBS:16} and a general non-Markovian network based SIR model \cite{WBS:17}. Our first aim is to establish conditions under which 
the analogous result for model (\ref{Eq:model1}) holds,  namely $ \mathbb{E}_{0} X_{i,t} \leq p_{i,t} $, where $ \mathbb{E}_{0} $ denotes expectation 
conditioned on the initial state $ X_{0} $.

We then consider another type of approximation to (\ref{Eq:model1}) called the independent site approximation. Define $ W_{t} =(W_{1,t},\ldots, W_{n,t}) $ 
where the $ W_{i,\cdot} $ are independent Markov chains on $ \{0,1\} $ such that
\begin{equation}
\mathbb{P}\left(W_{i,t+1} = 1 \middle| W_{i,t} \right) = C_{i}(p_{t}) (1-W_{i,t}) + S_{i}(p_{t})) W_{i,t}, \label{Eq:model3}
\end{equation}
$ W_{i,0} = X_{i,0} $ and $ p_{t} $ satisfies (\ref{Eq:model2}). By construction $ p_{i,t} = \mathbb{E}_{0}(W_{i,t}) $ for all $i $ and all $ t $. The 
independent site approximation is motivated by propagation of chaos type results where finite collections of particles in interacting particle systems evolve 
almost independently of one another under certain conditions \cite{Sznitman:91}. This phenomenon has been demonstrate for a number of population 
models that exhibit a law of large numbers \cite{BL:15,FP:21}. If for a fixed $ i$ and $ t $ the inequality $ \mathbb{E}_{0} X_{i,t} \leq p_{i,t} $ holds, 
then $ X_{i,t} \leq_{\text{st}} W_{i,t}$, where $ \leq_{\text{st}} $ denotes the usual stochastic ordering. Our second aim is to show that (\ref{Eq:model1}) 
is smaller than (\ref{Eq:model3}) in a form of multivariate stochastic ordering called the lower orthant order. This result will not require the process to 
display any law of large numbers behaviour for the process.

As occupancy processes are natural time discretisation for finite spin systems, we obtain analogous results for spin systems. The bound on the expectations 
is obtained using the positive correlations property of spin systems. The stochastic ordering result for spin systems is obtained by applying a limiting 
argument to the occupancy process.

\section{The deterministic system bounds the probability of occupation}

In this section we show the deterministic process (\ref{Eq:model2}) provides a bound on the expected state of the occupancy process (\ref{Eq:model1}). 
The main step in the proof is the application of the Harris inequality.

\begin{theorem} \label{thm1}
Assume that for each $ i$ the functions $ C_{i} $ and $ S_{i} $ extended to $ [0,1]^{n}$  are increasing and concave, and the $ S_{i} - C_{i} $ are 
decreasing and non-negative. If $ p_{i,0} = X_{i,0} $ for all $ i $, then $ \mathbb{E}_{0} X_{i,t} \leq p_{i,t} $ for all $ i $ and all $ t \geq 0 $. 
\end{theorem}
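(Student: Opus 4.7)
The plan is to proceed by induction on $t$; the base case is immediate since $X_{i,0}=p_{i,0}$ by hypothesis. For the inductive step, set $q_{j,t} := \mathbb{E}_0 X_{j,t}$ and $D_i := S_i - C_i$, and assume $q_{j,t} \leq p_{j,t}$ for every $j$. Rewriting the one-step transition probability as $\pi_i(x) = C_i(x) + D_i(x)\, x_i$ gives the decomposition
\[
q_{i,t+1} = \mathbb{E}_0 C_i(X_t) + \mathbb{E}_0 \bigl[ D_i(X_t)\, X_{i,t} \bigr].
\]

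The first move is to decouple the cross term via the Harris inequality. As functions of $X_t$, the map $D_i$ is decreasing while $X_{i,t}$ is the $i$-th coordinate, hence increasing, so the two are oppositely monotone. The joint law of $X_t$ has the requisite FKG / positive-correlation property thanks to a monotone coupling: if the dynamics are driven by iid uniforms $(U_{i,s})$ on $[0,1]$ via $X_{i,s+1}=1$ precisely when $U_{i,s} \leq \pi_i(X_s)$, then --- using all three hypotheses ($C_i$ and $S_i$ increasing and $D_i \geq 0$) to check that $\pi_i$ is itself increasing on $[0,1]^n$ --- each $X_{i,t}$ becomes a coordinate-wise decreasing function of those uniforms. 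Harris applied to the product measure of the uniforms then yields
\[
\mathbb{E}_0 \bigl[ D_i(X_t)\, X_{i,t} \bigr] \leq \mathbb{E}_0 D_i(X_t) \cdot q_{i,t},
\]
so that $q_{i,t+1} \leq \mathbb{E}_0 \bigl[ C_i(X_t)(1-q_{i,t}) + S_i(X_t)\, q_{i,t} \bigr]$.

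The integrand $h(x) := C_i(x)(1-q_{i,t}) + S_i(x)\, q_{i,t}$ is a deterministic convex combination of concave, increasing functions and hence is itself concave and increasing on $[0,1]^n$. Jensen's inequality gives $\mathbb{E}_0 h(X_t) \leq h(q_t)$; monotonicity of $h$ together with the inductive hypothesis gives $h(q_t) \leq h(p_t) = C_i(p_t) + D_i(p_t)\, q_{i,t}$; and $D_i(p_t) \geq 0$ combined with $q_{i,t} \leq p_{i,t}$ yields
\[
C_i(p_t) + D_i(p_t)\, q_{i,t} \leq C_i(p_t) + D_i(p_t)\, p_{i,t} = p_{i,t+1},
\]
closing the induction.

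The main obstacle is justifying Harris on $X_t$, whose law is not a product measure once $t \geq 1$. The plan circumvents this by pushing the inequality onto the underlying iid uniforms, where Harris applies verbatim; the one non-trivial input there is the coordinate-wise monotonicity of $\pi_i$, which is exactly where the hypothesis $D_i \geq 0$ does its essential work. Concavity of $C_i$ and $S_i$ is then used only in the final Jensen step.
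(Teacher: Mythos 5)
Your proposal is correct and follows essentially the same route as the paper: the same single-threshold uniform coupling (your $\mathbb{I}(U\leq \pi_i(X_s))$ coincides with the paper's $\mathcal{X}_i$ on $\{0,1\}^n$), the same monotonicity observations making $X_{i,t}$ decreasing and $S_i(X_t)-C_i(X_t)$ increasing in the driving uniforms, the same Harris decoupling of the cross term, and the same Jensen-plus-induction closing step. No gaps.
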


\begin{proof} We can express the Markov chain $X_{t}$ as
\begin{equation} \label{Eq:Proof1}
\begin{split}
X_{i,t+1} &=  ( 1- X_{i,t}) \mathbb{I}(U_{i,t+1} \leq C_{i}(X_{t})) + X_{i,t} \mathbb{I}(U_{i,t+1} \leq S_{i}(X_{t}))  \\
& = \mathbb{I}(U_{i,t+1} \leq C_{i}(X_{t}))  + X_{i,t} \mathbb{I}( C_{i}(X_{t}) \leq U_{i,t+1} \leq S_{i}(X_{t})) =: \mathcal{X}_{i}(U_{i,t+1}, X_{t}),
\end{split}
\end{equation}
where the $ U_{i,t} $ form an array of independent standard uniform random variables. The function $ \mathcal{X}_{i}(u,x) $ is decreasing in $ u $ for 
fixed $ x \in \{0,1\}^{n} $. Also, for any $x,y \in \{0,1\}^{n} $ such that $ x \leq y $ in the partial ordering on $ \{0,1\}^{n} $ (that is, $ x \leq y \iff x_{i} \leq y_{i} $ 
for all $ i $), we have $ \mathcal{X}_{i}(u,x) \leq \mathcal{X}_{i}(u,y) $ for any $ u \in [0,1] $ as $ C_{i}(x) $ and $ S_{i}(x) $ are increasing in $x$. Hence $ X_{i,t} $ 
is a decreasing function of the array $ \{ U_{i,t}\} $. As $ S_{i}(x) - C_{i}(x) $ is decreasing in $ x $, we see $ S_{i}(X_{t}) - C_{i}(X_{t}) $ is an increasing 
function of the array $ \{ U_{i,t}\}$. Taking conditional expectations
\begin{align*}
 \mathbb{E} (X_{i,t+1} \mid X_{t}) & = C_{i}(X_{t}) + \left( S_{i}(X_{t}) - C_{i}(X_{t})\right)\,  X_{i,t} .
\end{align*}
Then taking expectations and applying the Harris inequality
\begin{align*}
\mathbb{E}_{0} X_{i,t+1} & \leq \mathbb{E}_{0} C_{i}(X_{t}) + \mathbb{E}_{0}\left( S_{i}(X_{t}) - C_{i}(X_{t})\right)\,  \mathbb{E}_{0}X_{i,t} \\
&=  (1 - \mathbb{E}_{0}X_{i,t})\, \mathbb{E}_{0} C_{i}(X_{t}) + \mathbb{E}_{0}X_{i,t}\, \mathbb{E}_{0}\left( S_{i}(X_{t}) \right). 
\end{align*}
As $ C_{i} $ and $ S_{i}$ are concave, we can apply Jensen's inequality to obtain
\begin{equation} \label{Eq:ProofIneq}
\mathbb{E}_{0} X_{i,t+1}  \leq (1 - \mathbb{E}_{0}X_{i,t})\,  C_{i}(\mathbb{E}_{0} X_{t}) + \mathbb{E}_{0}X_{i,t}\, S_{i}(\mathbb{E}_{0} X_{t}).
\end{equation}
Write $ \pi_{i,t} = \mathbb{E}_{0}X_{i,t} $. Suppose $ \pi_{i,t} \leq p_{i,t} $ for all $ i $, where $ p_{t} $ satisfies the recursion (\ref{Eq:model2}) with $ p_{i,0} = X_{i,0} $. Then
\begin{align*}
p_{i,t+1} & 
= C_{i}(p_{t}) + (S_{i}(p_{t}) - C_{i}(p_{t})) p_{i,t} \\
& \geq 
C_{i}(p_{t}) + (S_{i}(p_{t}) - C_{i}(p_{t})) \pi_{i,t} = (1-\pi_{i,t}) C_{i}(p_{t}) + S_{i}(p_{t}) \pi_{i,t},
\end{align*}
as $ S_{i} - C_{i} \geq 0 $ and $ \pi_{i,t} \leq p_{i,t} $. Since $ C_{i} $ and $ S_{i}$ are increasing,
\begin{align*}
p_{i,t+1} & \geq (1-\pi_{i,t}) C_{i}(\pi_{t}) +S_{i}(\pi_{t}) \pi_{i,t}  \geq \pi_{i,t+1}.
\end{align*}
Hence, $ \pi_{i,t} \leq p_{i,t} $ for all $ i $ and all $ t \geq 0 $. 
\end{proof}

The deterministic process (\ref{Eq:model2}) requires the functions $ C_{i} $ and $ S_{i} $ to be extended from $ \{0,1\}^{n} $ to $ [0,1]^{n}$. Without 
imposing additional restrictions, these functions do not have a unique extension, but some extensions will be better than others in terms of how close $ p_{i,t} $ 
is to $ \mathbb{E}_{0} X_{i,t}$. Let $ \tilde{p}_{t} $ be the solution to (\ref{Eq:model2}) with the functions $ C_{i} $ and $ S_{i}$ replaced by $ \widetilde{C}_{i} $ 
and $ \widetilde{S}_{i}$ satisfying $ C_{i}(p) \leq \widetilde{C}_{i}(p) $ and $ S_{i}(p) \leq \widetilde{S}_{i}(p) $ for all $ p \in [0,1]^{n} $. If $ p_{t} \leq \tilde{p}_{t} $
 in the partial order on $ [0,1]^{n} $, then 
\[
p_{i,t+1} = (1-p_{i,t}) C_{i}(p_{t}) + p_{i,t} S_{i}(p_{t})  \leq (1-\tilde{p}_{i}) \widetilde{C}_{i}(\tilde{p}_{t}) + \tilde{p}_{i,t} \widetilde{S}_{i}(\tilde{p}_{t}) = \tilde{p}_{i,t+1}.
\]
In light of Theorem \ref{thm1} we prefer smaller extensions of $ C_{i} $ and $ S_{i} $ that are increasing and concave. Methods for constructing the smallest 
concave extension are discussed in \cite{TRX:13}, though the gains achieved with these methods are unlikely to repay the computational effort required for their 
calculation. A relatively simple improvement can be obtained by noting that the occupancy process is not affected by the value assigned to $ C_{i}(x) $ when 
$ x_{i} = 1 $.  Suppose $ C_{i} $ is an increasing concave extension  and define $ \bar{C}_{i}(p) = C_{i}(\tilde{p}) $, where $ \tilde{p}_{j} = p_{j} $ for 
$ j \neq i $ and $ \tilde{p}_{i} = 0 $. The function $ \bar{C}_{i} $ is an increasing concave function which satisfies $ C_{i}(p) \geq \bar{C}_{i}(p) $ for all 
$ p \in [0,1]^{n} $. This means we should avoid extensions of $ C_{i} $ which result in the deterministic process being `self-colonising', that is the value of 
$ p_{i} $ affecting the value of $ C_{i}(p) $.
Similar comments apply to the extension of $ S_{i} $ since the process is not affected by the value of $ S_{i}(x) $ when $ x_{i} = 0 $.

Since occupancy processes can be viewed as a time discretisation of finite spin systems \cite[Algorithms 1 \& 2]{MH:21}, it is natural consider a version of 
Theorem \ref{thm1} for those processes. Any finite spin system $ (X,\, t \in \mathbb{R}_{+}) $ can be represented as a Markov jump process in the usual transition 
notation:
\begin{equation} \label{Eq:model4}
\begin{aligned}
X_i:\quad 0\to 1 &\quad \mbox{at rate } \lambda_i(X)\\
1\to 0 &\quad \mbox{at rate } \mu_i(X)
\end{aligned} \qquad \mbox{for }i=1,\dots,n,
\end{equation}
where $ \lambda_{i},\, \mu_{i} :\{0,1\}^{n} \to \mathbb{R}_{+} $. The expectation of $ X_{i,t} $ satisifies
\begin{equation} \label{Eq10}
\mathbb{E}_{0} X_{i,t} =  X_{i,0} + \int^{t}_{0} \mathbb{E}_{0}\left(  (1 - X_{i,s}) \lambda_{i}(X_{s}) - X_{i,s} \mu_{i}(X_{s}) \right) ds. 
\end{equation}
Provided the functions $ \lambda_{i} $ and $ \mu_{i} $ can be extended from $\{0,1\}^{n} $ to  $ [0,1]^{n}$, this suggests the deterministic approximation 
for the spin system is the solution to the system of ordinary differential equations
\begin{equation} \label{Eq:model5}
p^{\prime}_{i,t} = (1-p_{i,t}) \lambda_{i}(p_{t}) - p_{i,t} \mu_{i}(p_{t}).
\end{equation}

\begin{theorem} \label{Cor1}
Assume that for each $ i$ the functions $ \lambda_{i} $ extended to $ [0,1]^{n} $ are increasing and concave, $ \mu_{i} $ extended to $ [0,1]^{n} $ are 
decreasing and convex, and the $ \lambda_{i} +\mu_{i} $ are increasing. If $ p_{i,0} = X_{i,0} $ for all $ i $, then $ \mathbb{E}_{0} X_{i,t} \leq p_{i,t} $ for all 
$ i $ and all $ t \geq 0 $. 
\end{theorem}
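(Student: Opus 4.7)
The plan is to realise the spin system $(X_t, t \in \mathbb{R}_+)$ as the limit of a sequence of occupancy processes for which Theorem \ref{thm1} already delivers the desired bound, and then pass to the limit. This mirrors the connection between spin systems and occupancy processes exploited in \cite{MH:21}.

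First, I would fix $h > 0$ small enough that $h\lambda_i(x)$, $h\mu_i(x)$ and $h(\lambda_i(x) + \mu_i(x))$ all lie in $[0,1]$ for every $i$ and every $x \in \{0,1\}^{n}$; this is possible because $\{0,1\}^{n}$ is finite. I then define an occupancy process $X^{(h)}$ with
\[
C^{(h)}_{i}(x) = h\lambda_{i}(x), \qquad S^{(h)}_{i}(x) = 1 - h\mu_{i}(x),
\]
starting from $X^{(h)}_{0} = X_{0}$. Under the hypotheses, $C^{(h)}_{i}$ inherits increasingness and concavity from $\lambda_{i}$; $S^{(h)}_{i}$ is increasing and concave because $-\mu_{i}$ is (from $\mu_{i}$ decreasing and convex); and $S^{(h)}_{i} - C^{(h)}_{i} = 1 - h(\lambda_{i} + \mu_{i})$ is decreasing (since $\lambda_{i} + \mu_{i}$ is increasing) and non-negative by the choice of $h$. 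Thus Theorem \ref{thm1} applies and yields $\mathbb{E}_{0} X^{(h)}_{i,k} \leq p^{(h)}_{i,k}$ for every $i$ and every $k \geq 0$, where $p^{(h)}$ satisfies (\ref{Eq:model2}) with $C^{(h)}_{i}, S^{(h)}_{i}$ starting from $p^{(h)}_{0} = X_{0}$.

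Next I would let $h \to 0$ along $k_{h} := \lfloor t/h \rfloor$. A direct substitution shows
\[
p^{(h)}_{i,k+1} = p^{(h)}_{i,k} + h\bigl[(1 - p^{(h)}_{i,k})\lambda_{i}(p^{(h)}_{k}) - p^{(h)}_{i,k}\mu_{i}(p^{(h)}_{k})\bigr],
\]
i.e.\ the explicit Euler scheme for (\ref{Eq:model5}); continuity of $\lambda_{i}, \mu_{i}$ on the compact set $[0,1]^{n}$ gives $p^{(h)}_{i,k_{h}} \to p_{i,t}$. On the stochastic side, the one-step transition probabilities of $X^{(h)}$ are, to leading order in $h$, exactly those prescribed by the rates in (\ref{Eq:model4}), so $X^{(h)}_{k_{h}}$ converges in distribution to $X_{t}$, e.g.\ via convergence of infinitesimal generators, or more transparently by a Poisson-based graphical construction in which $X^{(h)}$ is obtained by thinning the same Poisson processes that drive $X$. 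Since $X_{i,t} \in \{0,1\}$, convergence in distribution implies $\mathbb{E}_{0} X^{(h)}_{i,k_{h}} \to \mathbb{E}_{0} X_{i,t}$, and the bound $\mathbb{E}_{0} X_{i,t} \leq p_{i,t}$ follows by passing to the limit in the inequality from Theorem \ref{thm1}.

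The main obstacle is the simultaneous limit: one must control both the Euler error for (\ref{Eq:model5}) and the convergence of the discrete-time chain $X^{(h)}$ to the jump process $X$ on $[0,t]$. A coupling via a common family of Poisson clocks for each site seems the cleanest route, since it makes both statements almost-sure and removes the need to assemble a tightness-plus-finite-dimensional-convergence argument.
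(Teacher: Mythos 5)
Your proof is correct in outline, but it takes a genuinely different route from the paper. The paper proves Theorem \ref{Cor1} directly in continuous time: the monotonicity hypotheses ($\lambda_{i}$ increasing, $\mu_{i}$ decreasing) make the spin system attractive, hence it has positive correlations \cite[II Theorem 2.14, III Theorem 2.2]{Liggett:05}; this plays the role of the Harris inequality and, combined with Jensen's inequality (concavity of $\lambda_{i}$, convexity of $\mu_{i}$), yields the system of differential inequalities $\pi_{i,t}^{\prime} \leq (1-\pi_{i,t})\lambda_{i}(\pi_{t}) - \pi_{i,t}\mu_{i}(\pi_{t})$, which is then compared to (\ref{Eq:model5}) via Wa\.{z}ewski's theorem on quasi-monotone differential inequalities \cite{Wazup}. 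You instead discretise, apply Theorem \ref{thm1} to the occupancy process with $C_{i}^{(h)} = h\lambda_{i}$, $S_{i}^{(h)} = 1 - h\mu_{i}$, and pass to the limit; your verification of the hypotheses of Theorem \ref{thm1} is exactly right, and this is in fact the strategy the paper itself adopts later for the lower orthant order result (its Theorem 4), there using uniformisation $X^{\delta}_{N(\delta^{-1}t)}$ rather than the deterministic skeleton $k_{h} = \lfloor t/h\rfloor$. What each approach buys: the paper's direct argument is short once the continuous-time FKG property is quoted and avoids any limiting procedure; yours recycles Theorem \ref{thm1} wholesale and needs neither the positive-correlations theorem for spin systems nor the comparison lemma for differential inequalities, at the cost of controlling a double limit. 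One point in your write-up deserves more care: mere continuity of $\lambda_{i},\mu_{i}$ does not give convergence of the explicit Euler scheme to \emph{the} solution of (\ref{Eq:model5}) (nor even uniqueness of that solution); concavity gives Lipschitz continuity only locally in the interior of $[0,1]^{n}$, so you should either add a Lipschitz hypothesis (as the paper does for its Theorem 4) or argue via subsequential limits and maximal solutions. The paper's appeal to Wa\.{z}ewski's theorem carries a comparable implicit regularity requirement, so this is a shared, and minor, gap rather than a defect peculiar to your route.
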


\begin{proof}
With the  $ \lambda_{i} $ increasing and the $ \mu_{i} $ decreasing, the spin system (\ref{Eq:model4}) is said to be attractive \cite[III Defintion 2.1]{Liggett:05}. 
An attractive spin system $X$ with fixed initial condition $ X_{0} $ has positive correlations at all times $ t \geq 0 $, that is
\[
\mathbb{E}(f(X_{t}) g(X_{t})) \geq \mathbb{E} f(X_{t}) \mathbb{E} g(X_{t}) 
\]
for all continuous functions $ f $ and $ g $ that are monotone in the sense $ f(\eta) \leq f(\zeta) $ whenever $ \eta \leq \zeta $ \cite[II Theorem 2.14, III Theorem 2.2]{Liggett:05}. 
Let $ \pi_{i,t} = \mathbb{E}_{0} X_{i,t} $.  Differentiating (\ref{Eq10}) gives
\begin{equation}
\pi_{i,t}^{\prime} = \mathbb{E}_{0}\left(  (1 - X_{i,t}) \lambda_{i}(X_{t}) - X_{i,t} \mu_{i}(X_{t}) \right). \label{Eq10b}
\end{equation}
As $ \lambda_{i}(\cdot) + \mu_{i}(\cdot) $ is increasing, we can apply the positive correlations property to (\ref{Eq10b}) to obtain
\begin{align*}
\pi_{i,t}^{\prime} & \leq  \mathbb{E}_{0}  \lambda_{i}(X_{t}) - \mathbb{E}_{0} X_{i,t} \mathbb{E}_{0} (\lambda_{i}(X_{t}) + \mu_{i}(X_{t}))  \\
& \leq   (1 - \mathbb{E}_{0}X_{i,t} ) \mathbb{E}_{0}  \lambda_{i}(X_{t}) - \mathbb{E}_{0} X_{i,t} \mathbb{E}_{0} \mu_{i}(X_{t}).
\end{align*}
As $ \lambda_{i} $ is concave and $ \mu_{i} $ is convex, Jensen's inequality yields
\begin{align}
\pi_{i,t}^{\prime} & \leq (1 - \mathbb{E}_{0} X_{i,s} )   \lambda_{i}(\mathbb{E}_{0} X_{s}) - \mathbb{E}_{0} X_{i,s}  \mu_{i}(\mathbb{E}_{0} X_{s})  = (1-\pi_{i,t}) \lambda_{i}(\pi_{t}) - \pi_{i,t} \mu_{i}(\pi_{t}) \label{Eq11}
\end{align}
Define the functions $\phi_{i}: [0,1]^{n} \to \mathbb{R}_{+}$ such that 
\[
\phi_{i}(u_{1},\ldots,u_{n}) := (1-u_{i}) \lambda_{i}(u) - u_{i} \mu_{i}(u).
\]
As the $ \lambda_{i}$ are increasing and the $ \mu_{i}$ are decreasing, each function $\phi_{i}$ is non-decreasing in each $u_{j} $ for $j\neq i$. The
system of differential inequalities (\ref{Eq11}) satisfies the conditions of a result by Wa\.{z}ewski \cite{Wazup} (see also \cite[Theorem 1 of Section 13 in Chapter XI]{Mitrinovic:1991}), 
which allows us to conclude that if $ p_{i,0} = X_{i,0} $ for all $i $, then $ \pi_{i,t} \leq p_{i,t} $ for all $i $ and $ t \geq 0$.
\end{proof}

\section{Propagation of chaos and stochastic ordering}

An interacting particle system is said to display propagation of chaos if the particles  evolve almost independently of one another when the system size is 
large. Demonstrating this behaviour usually involves showing a law of large numbers holds so that the transition rates of the individual particles are well 
approximated by some deterministic process. A propagation of chaos result was established for the occupancy process in \cite{BMP:15,HMP:20}, where the 
independent site approximation was coupled to the occupancy process and the two processes shown to be close over finite time intervals. 

Instead of attempting to show the occupancy process is close to the independent site approximation, in this section we show that the occupancy process is 
dominated by the independent site approximation in a certain sense. 

A weak notion of multivariate stochastic ordering is the lower orthant order \cite[Section 6.G.1]{SS:07}. We say that the random vector $ \mathbf{Y} $ is 
smaller than the random vector $ \mathbf{Z} $ in the lower orthant order, denoted $ \mathbf{Y} \leq_{\text{lo}} \mathbf{Z} $, if
\[
\mathbb{P}(Y_{1} \leq \zeta_{1}, \ldots, Y_{m} \leq \zeta_{m} ) \geq \mathbb{P}(Z_{1} \leq \zeta_{1},\ldots, Z_{m} \leq \zeta_{m})
\]
for all $ (\zeta_{1},\ldots, \zeta_{m}) \in \mathbb{R}^{m} $. For distributions on the hypercube $ \{0,1\}^{m}$, this condition reduces to 
\[
\mathbb{P}(Y_{i} = 0 \text{ for all } i \in A) \geq \mathbb{P}(Z_{i} = 0 \text{ for all } i \in A),
\]
for all subsets $ A \subset \{1,2,\ldots,m\}$. Write $ \mathbb{P}_{0} $ to denote conditioning on the initial state $ X_{0} $. The Harris inequality applied 
to the construction (\ref{Eq:Proof1}) shows 
\[
\mathbb{P}_{0}\left(X_{i,t} = 0 \text{ for all } i \in A \right) = \mathbb{E}_{0} \left( \prod_{i \in A} (1 - X_{i,t}) \right) \geq \prod_{i\in A} \mathbb{E}_{0} (1 - X_{i,t}),
\]
for a given $t $ and all subsets $ A \subset \{1,2,\ldots,n\}$. Then applying Theorem 2.1 we see
\[
\prod_{i\in A} \mathbb{E}_{0} (1 - X_{i,t})  \geq  \prod_{i \in A} \mathbb{E}_{0}(1 - W_{i,t})  = \mathbb{P}_{0}\left(W_{i,t} = 0 \text{ for all } i \in A \right).
\]
This establishes $ X_{t} \leq_{\text{lo}} W_{t} $ for a given time $ t \geq 0$. We would like to establish the ordering relation between $ X$ and $ W $ for all 
times in the sense that for any subset $ A \subseteq \{1,\ldots,n\} $, positive integers $ m_{i} $ and times $ t_{i,1},\ldots, t_{i,m_{i}} $
\begin{equation} \label{Eq:PC1}
\begin{aligned}
\mathbb{P}_{0}\left(X_{i,t_{i,j}} = 0 \text{ for all } i \in A,\right.&\left. \ j  \in  \{1,\ldots,m_{i}\} \right)  \\ &  \geq \mathbb{P}_{0}\left(W_{i,t_{i,j}} = 0 \text{ for all } i \in A,\ j \in  \{1,\ldots,m_{i}\} \right).
\end{aligned}
\end{equation}
Note that for each $ i \in A$, the set of times $t_{i,1},\ldots, t_{i,m_{i}} $ may be different.

\begin{theorem} \label{thm2}
Assume the conditions of Theorem \ref{thm1} hold. Assume also that for all $ i $, $ S_{i} - C_{i} $ is convex. The process $ (X, t \in \mathbb{N}) $ given 
by (\ref{Eq:model1}) is smaller in the lower orthant order than the process $ (W, t \in\mathbb{N} ) $ given by (\ref{Eq:model3}).
\end{theorem}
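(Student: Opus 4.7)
The plan is to reduce (\ref{Eq:PC1}) to a single-site multi-time inequality via Harris and independence, then prove the single-site inequality by induction on the maximum time; the convexity of $S_i-C_i$ enters at the critical step bridging time-gaps.

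By the construction (\ref{Eq:Proof1}) each $X_{i,t}$ is a decreasing function of the driving uniform array $\{U_{k,s}\}$, so for each $i \in A$ the product $\prod_j(1-X_{i,t_{i,j}})$ is an increasing function of those uniforms. The Harris inequality applied across $i \in A$ yields
\[
\mathbb{P}_0(X_{i,t_{i,j}}=0\ \forall i,j) \;\ge\; \prod_{i \in A}\mathbb{E}_0\Bigl[\prod_{j}(1-X_{i,t_{i,j}})\Bigr],
\]
and the independence of the chains $W_{i,\cdot}$ across $i$ yields
\[
\mathbb{P}_0(W_{i,t_{i,j}}=0\ \forall i,j) \;=\; \prod_{i \in A}\mathbb{P}_0\Bigl(\bigcap_{j}\{W_{i,t_{i,j}}=0\}\Bigr).
\]
It therefore suffices to establish, for each fixed site $i$ and each finite $B\subset\mathbb{N}$,
\[
\mathbb{E}_0\Bigl[\prod_{s\in B}(1-X_{i,s})\Bigr] \;\ge\; \mathbb{P}_0\Bigl(\bigcap_{s\in B}\{W_{i,s}=0\}\Bigr).
\]

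I would prove the single-site inequality by induction on $T=\max B$. The base $T=0$ holds because $W_{i,0}=X_{i,0}$ is deterministic. For the inductive step, write $B=B'\cup\{T+1\}$ and condition on $X_T$; the identity $\mathbb{E}[1-X_{i,T+1}\mid X_T] = 1-C_i(X_T)-(S_i-C_i)(X_T)X_{i,T}$ splits the expectation into a main piece $\mathbb{E}_0[\prod_{s\in B'}(1-X_{i,s})(1-C_i(X_T))]$ and a cross piece in $(S_i-C_i)(X_T)X_{i,T}$. When $T\in B'$, the factor $(1-X_{i,T})$ is already in the product and kills the cross piece; Harris combined with Jensen (concavity of $C_i$) and Theorem \ref{thm1} then reproduces the Markov factor $(1-C_i(p_T))$ on the $W$-side, closing the induction.

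The substantive case is $T\notin B'$, where there is a gap just before $T+1$. I would decompose the LHS according to $X_{i,T}\in\{0,1\}$: the $X_{i,T}=0$ piece reduces to the previous case applied to $B\cup\{T\}$, and the $X_{i,T}=1$ piece must be matched against the $W$-analogue $\mathbb{P}_0(W_{i,s}=0\ \forall s\in B',\,W_{i,T}=1,\,W_{i,T+1}=0)$. It is here that the convexity of $S_i-C_i$ is essential: Jensen combined with the monotonicity of $S_i-C_i$ and $\pi_T\le p_T$ from Theorem \ref{thm1} gives
\[
\mathbb{E}_0(S_i-C_i)(X_T) \;\ge\; (S_i-C_i)(\pi_T) \;\ge\; (S_i-C_i)(p_T),
\]
which, together with an anti-Harris estimate exploiting that $(S_i-C_i)(X_T)$ is increasing while $X_{i,T}$ is decreasing in the uniforms, supplies the correctly signed control. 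The main obstacle I expect is the bookkeeping: matching the $X$-side expectation with the $W$-side matrix product $(P_{\max B'}\cdots P_T)_{00}$ over a general gap requires arranging the induction so that one missing time is filled in at a time and the convexity estimate is applied termwise.
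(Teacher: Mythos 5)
Your overall architecture matches the paper: Harris across sites plus independence of the $W_{i,\cdot}$ reduces the claim to a single-site, multi-time inequality, which is then proved by induction on the latest constrained time; your treatment of the case $T\in B'$ is also sound. The genuine gap is in the case $T\notin B'$, which is exactly where the convexity of $S_i-C_i$ must enter, and your proposed handling of it fails in two ways. First, the piecewise matching is false: writing $\pi_{i,t}=\mathbb{E}_0X_{i,t}$, the inequality
\[
\mathbb{E}_0\Bigl[X_{i,T}\,(1-X_{i,T+1})\prod_{s\in B'}(1-X_{i,s})\Bigr] \;\geq\; \mathbb{P}_0\bigl(W_{i,s}=0\ \forall s\in B',\ W_{i,T}=1,\ W_{i,T+1}=0\bigr)
\]
does not hold in general. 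Take $S_i\equiv s<1$ constant and $C_i$ strictly concave, increasing, with $C_i\leq s$ (so all hypotheses hold: $S_i-C_i=s-C_i$ is decreasing, nonnegative and convex), $B'=\emptyset$ and $T\geq 2$. The left side equals $(1-s)\,\pi_{i,T}$, the right side equals $(1-s)\,p_{i,T}$, and Theorem \ref{thm1} typically gives $\pi_{i,T}<p_{i,T}$ strictly. This is to be expected: $\{X_{i,T}=1\}$ is an upper-orthant event, so the domination you are proving pushes this piece in the \emph{opposite} direction; only the sum of the two pieces obeys the desired inequality. Second, the signs in your Jensen step are backwards: in the "main piece minus cross piece" framing, the term involving $(S_i-C_i)(X_T)$ carries a negative sign, so lower-bounding the total would require an \emph{upper} bound on $\mathbb{E}_0(S_i-C_i)(X_T)$, whereas convexity and Jensen deliver a lower bound. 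Moreover the cross piece is a product of factors with mixed monotonicity in the driving uniforms ($(S_i-C_i)(X_T)$ increasing, $X_{i,T}$ decreasing, $\prod_{s\in B'}(1-X_{i,s})$ increasing), so Harris does not factor it in either direction.

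The paper avoids both problems by regrouping the conditional probability as
\[
\mathbb{P}_0(X_{i,T+1}=0\mid X_T) \;=\; \bigl(1-S_i(X_T)\bigr) + \bigl(1-X_{i,T}\bigr)\bigl(S_i(X_T)-C_i(X_T)\bigr),
\]
a sum of two nonnegative terms, each of which (like $\prod_{s\in B'}(1-X_{i,s})$) is an increasing function of the driving uniforms. Harris then gives a lower bound on each term with the correct sign; concavity of $S_i$ handles the first factor, while convexity together with monotonicity of $S_i-C_i$ and Theorem \ref{thm1} gives $\mathbb{E}_0(S_i-C_i)(X_T)\geq (S_i-C_i)(\pi_T)\geq(S_i-C_i)(p_T)$ for the second. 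The two resulting terms match exactly the identical regrouping of $\mathbb{P}_0(W_{i,T+1}=0\mid W_{i,T})$, closing the induction. You should replace your gap-case decomposition with this one; the rest of your argument then goes through.
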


\begin{proof}
Let $ A $ be a subset of $ \{1,\ldots,n\} $, and for each $ i \in A $ take a positive integer $ m_{i} $ and times $ t_{i,1},\ldots, t_{i,m_{i}} $. Then by the Harris inequality
\begin{align*}
\lefteqn{\mathbb{P}_{0}\left(X_{i,t_{i,j}} = 0 \text{ for all } i \in A,\ j  \in  \{1,\ldots,m_{i}\} \right)} \\
 & = \mathbb{E}_{0} \left( \prod_{i \in A} \prod_{j=1}^{m_{i}} \left(1 - X_{i,t_{i,j}} \right) \right)  \geq  \prod_{i \in A} \mathbb{E}_{0} \left( \prod_{j=1}^{m_{i}} \left(1 - X_{i,t_{i,j}} \right) \right) \\
& \geq \prod_{i \in A} \mathbb{P}_{0} \left(X_{i,t_{i,j}} = 0 \text{ for all }  j \in \{1,\ldots,m_{i}\} \right).
\end{align*}

It remains to show that for each $ i $
\[
\begin{aligned}
\mathbb{P}_{0}\left(X_{i,t_{i,j}} = 0 \text{ for all }  j \in \{1,\ldots,m_{i}\} \right) \geq \mathbb{P}_{0}\left(W_{i,t_{i,j}} = 0 \text{ for all }  j \in \{1,\ldots,m_{i}\} \right).
\end{aligned}
\]
Now for   $ \boldsymbol{\omega}= (\omega_{1},\ldots,\omega_{m}) \in \{0,1\}^{m}$ define
\begin{align*}
P^{X}_{m}(\boldsymbol{\omega}) & := \mathbb{P}_{0}\left(X_{i,1} \leq \omega_{1}, \ldots, X_{i,m} \leq \omega_{m}\right) = \mathbb{E}_{0} \left[\prod_{t=1}^{m} (1 - X_{i,t})^{1-\omega_{t}} \right].
\end{align*}
and define $ P^{W}_{m}(\boldsymbol{\omega}) $ similarly. We prove by induction that $ P^{X}_{m} (\boldsymbol{\omega}) \geq P^{W}_{m}(\boldsymbol{\omega}) $ for all $ \boldsymbol{\omega} \in \{0,1\}^{m}$ and all $ m \geq 1 $.

Assume $ \boldsymbol{\omega} \neq \mathbf{1} $ and let $ \phi(\boldsymbol{\omega}) = \max\{j: \omega_{j} = 0 \} $. If $ \phi(\boldsymbol{\omega}) = 1 $, then from Theorem \ref{thm1}
\[
P^{X}_{m}(\omega_{1},\ldots,\omega_{m}) = \mathbb{E}_{0} (1-X_{i,1}) \geq \mathbb{E}_{0} (1- W_{i,1}) = P^{W}_{m}(\omega_{1},\ldots,\omega_{m}).
\]
Suppose now that $\phi(\boldsymbol{\omega}) = \tilde{m} \geq 2$. Then 
\begin{align*}
\lefteqn{P^{X}_{m}(\boldsymbol{\omega})} \\
 & =  \mathbb{E}_{0} \left[\prod_{t=1}^{\tilde{m}} (1 - X_{i,t})^{1-\omega_{t}} \right] =   \mathbb{E}_{0}\left[\mathbb{E}_{0}\left[(1-X_{i,\tilde{m}}) \middle| X_{\tilde{m}-1}\right]\prod_{t=1}^{\tilde{m}-1} (1 - X_{i,t})^{1-\omega_{t}} \right]\\
 & = \mathbb{E}_{0} \left[\left((1-S_{i}(X_{\tilde{m}-1})) + (1-X_{i,\tilde{m}-1})(S_{i}(X_{\tilde{m}-1})-C_{i}(X_{\tilde{m}-1}))\right) \prod_{t=1}^{\tilde{m}-1}(1-X_{i,t})^{1-\omega_{t}} \right] .
\end{align*}
The function $ S_{i}(x) - C_{i}(x)$ is decreasing in $x$ by assumption and $X_{i,t}$ is a decreasing function of the array $\{U_{i,t}\}$ by construction 
(\ref{Eq:Proof1}). As it is composition of two decreasing functions, $ S_{i}(X_t) - C_{i}(X_t) $ is an increasing function of the array $\{ U_{i,t} \}$. Applying 
the Harris inequality  shows
\begin{multline*}
\mathbb{E}_{0} \left[(1-X_{i,\tilde{m}-1})(S_{i}(X_{\tilde{m}-1})-C_{i}(X_{\tilde{m}-1})\prod_{t=1}^{\tilde{m}-1}(1-X_{i,t})^{1-\omega_{t}} \right] \\
 \geq \mathbb{E}_{0} \left[ (S_{i}(X_{\tilde{m}-1}) - C_{i}(X_{\tilde{m}-1}) \right] P^{X}_{m}(\omega_{1},\ldots,\omega_{\tilde{m}-2},0,1,\ldots,1). 
\end{multline*}
Since $ S_{i} - C_{i} $ is also convex, Jensen's inequality with Theorem \ref{thm1} shows
\begin{multline*}
\mathbb{E}_{0} \left[(1-X_{i,\tilde{m}-1})(S_{i}(X_{\tilde{m}-1})-C_{i}(X_{\tilde{m}-1})\prod_{t=1}^{\tilde{m}-1}(1-X_{i,t})^{1-\omega_{t}} \right] \\
 \geq (S_{i}(p_{\tilde{m}-1}) - C_{i}(p_{\tilde{m}-1}) ) P^{X}_{m}(\omega_{1},\ldots,\omega_{\tilde{m}-2},0,1,\ldots,1). 
\end{multline*}
The same argument shows
\begin{align*}
\mathbb{E}_{0} \left[(1-S_{i}(X_{\tilde{m}-1})) \prod_{t=1}^{\tilde{m}-1}(1-X_{i,t})^{1-\omega_{t}} \right] &\geq (1-S_{i}(p_{\tilde{m}-1})) P^{X}_{m}(\omega_{1},\ldots,\omega_{\tilde{m}-1},1,\ldots,1).
\end{align*}
Therefore,
\begin{multline}
P^{X}_{m}(\boldsymbol{\omega})
 \geq (1-S_{i}(p_{\tilde{m}-1})) P^{X}_{m}(\omega_{1},\ldots,\omega_{\tilde{m}-1},1,\ldots,1) \\+ \left( (S_{i}(p_{\tilde{m}-1}) - C_{i}(p_{\tilde{m}-1}) \right) P^{X}_{m}(\omega_{1},\ldots,\omega_{\tilde{m}-2},0,1,\ldots,1). \label{Eq:Proof3}
\end{multline}
On the other hand, for the process $ W $ given in (\ref{Eq:model3})
\begin{align*}
P^{W}_{m}(\boldsymbol{\omega})
 & =  \mathbb{E}_{0} \left[\prod_{t=1}^{\tilde{m}} (1 - W_{i,t})^{1-\omega_{t}} \right] =   \mathbb{E}_{0} \left[\mathbb{E}_{0} \left[(1-W_{i,\tilde{m}}) \middle| W_{\tilde{m}-1}\right]\prod_{t=1}^{\tilde{m}-1} (1 - W_{i,t})^{1-\omega_{t}} \right]\\
 & = \mathbb{E}_{0}  \left[\left((1-S_{i}(p_{\tilde{m}-1})) + (1-W_{i,\tilde{m}-1})(S_{i}(p_{\tilde{m}-1})-C_{i}(p_{\tilde{m}-1})\right) \prod_{t=1}^{\tilde{m}-1}(1-W_{i,t})^{1-\omega_{t}} \right] .
\end{align*}
Therefore,
\begin{multline}
P^{W}_{m}(\boldsymbol{\omega})
 = (1-S_{i}(p_{\tilde{m}-1})) P^{W}_{m}(\omega_{1},\ldots,\omega_{\tilde{m}-1},1,\ldots,1) \\+ \left( (S_{i}(p_{\tilde{m}-1}) - C_{i}(p_{\tilde{m}-1}) \right) P^{W}_{m}(\omega_{1},\ldots,\omega_{\tilde{m}-2},0,1,\ldots,1).\label{Eq:Proof4}
\end{multline}
If $ P^{X}_{m} (\boldsymbol{\omega}) \geq P^{W}_{m}(\boldsymbol{\omega}) $ for all $ \boldsymbol{\omega} \in \{0,1\}^{m}$ such that 
$ \phi(\boldsymbol{\omega}) \leq \tilde{m}-1 $, then comparing (\ref{Eq:Proof3}) and (\ref{Eq:Proof4}) shows $ P^{X}_{m} (\boldsymbol{\omega}) \geq P^{W}_{m}(\boldsymbol{\omega}) $ 
for all $ \boldsymbol{\omega} \in \{0,1\}^{m}$ such that $ \phi(\boldsymbol{\omega}) \leq \tilde{m} $.
\end{proof}

For spin systems the independent site approximation is given by $ W_{t} =(W_{1,t},\ldots, W_{n,t}) $ where the $ W_{i,\cdot} $ are independent 
Markov chains on $ \{0,1\} $ such that
\begin{equation} \label{Eq:model7}
\begin{aligned}
W_i:\quad 0\to 1 &\quad \mbox{at rate } \lambda_i(p_{t})\\
1\to 0 &\quad \mbox{at rate } \mu_i(p_{t})
\end{aligned} \qquad \mbox{for }i=1,\dots,n,
\end{equation}
$ W_{i,0} = X_{i,0} $ and $ p_{t} $ satisfies (\ref{Eq:model5}). Since the lower orthant order is closed under convergence in distribution 
\cite[Theorem 6.G.3(d)]{SS:07}, a limiting argument can be used to prove the following result.

\begin{theorem}
Assume the conditions of Theorem \ref{Cor1} hold. Assume also that for all $ i $, $ \lambda_{i} + \mu_{i} $ is concave, and each of the 
$ \lambda_{i}$ and $\mu_{i}$ are Lipschitz continuous. The process $ (X, t \in \mathbb{R}_{+}) $ given by (\ref{Eq:model4}) is smaller in the 
lower orthant order (\ref{Eq:PC1}) than the process $ (W, t \in\mathbb{R}_{+} ) $ given by (\ref{Eq:model7}).
\end{theorem}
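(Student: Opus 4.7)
The plan is to realise the continuous-time spin system as the limit of a sequence of occupancy processes, apply Theorem~\ref{thm2} to each of them, and pass to the limit using closure of the lower orthant order under convergence in distribution~\cite[Theorem 6.G.3(d)]{SS:07}. The discretisation I would use is the Euler-type scheme in \cite{MH:21} that underlies the claim that occupancy processes are time discretisations of finite spin systems.

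Fix a small step $h > 0$ and define an occupancy process $X^{(h)}$ on $\{0,1\}^n$ with colonisation and survival functions
\[
C_i^{(h)}(x) = h\lambda_i(x), \qquad S_i^{(h)}(x) = 1 - h\mu_i(x),
\]
interpreting the $k$-th step of $X^{(h)}$ as physical time $kh$. The Lipschitz hypothesis makes $\lambda_i$ and $\mu_i$ bounded on $[0,1]^n$, so for $h$ small these take values in $[0,1]$. The first step is to verify the hypotheses of Theorem~\ref{thm2}: $C_i^{(h)}$ inherits monotonicity and concavity from $\lambda_i$; $S_i^{(h)}$ inherits them from the fact that $\mu_i$ is decreasing and convex; and $S_i^{(h)} - C_i^{(h)} = 1 - h(\lambda_i + \mu_i)$ is decreasing (since $\lambda_i + \mu_i$ is increasing), non-negative for $h$ small, and convex (using exactly the extra hypothesis that $\lambda_i + \mu_i$ is concave). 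Writing $p^{(h)}$ for the associated deterministic iteration~(\ref{Eq:model2}) and $W^{(h)}$ for the independent site approximation~(\ref{Eq:model3}) of $X^{(h)}$, Theorem~\ref{thm2} yields $X^{(h)} \leq_{\mathrm{lo}} W^{(h)}$ in the sense of~(\ref{Eq:PC1}).

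It then remains to send $h \to 0$, for which two convergences are needed. First, the finite-dimensional distributions of $X^{(h)}$ at steps $\lfloor t_{i,j}/h \rfloor$ should converge to those of the spin system $X$ at times $t_{i,j}$; on the finite state space $\{0,1\}^n$ the one-step transition matrix of $X^{(h)}$ is $I + hQ + O(h^2)$, where $Q$ is the generator of the spin system, so $(I + hQ + O(h^2))^{\lfloor t/h \rfloor} \to e^{tQ}$ and the Markov property extends this to joint distributions at several times. Second, by Lipschitz continuity of $\lambda_i$ and $\mu_i$, the Euler iteration $p^{(h)}_{\lfloor t/h \rfloor}$ converges to the solution $p_t$ of~(\ref{Eq:model5}); since $W^{(h)}$ consists of independent inhomogeneous Bernoulli chains driven by $p^{(h)}$, this passes to finite-dimensional convergence of $W^{(h)}$ to the continuous-time independent site process $W$ of~(\ref{Eq:model7}). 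Applying closure of the lower orthant order under convergence in distribution to the finite-dimensional vectors indexed by $\{(i,j) : i \in A,\ 1 \leq j \leq m_i\}$ then delivers~(\ref{Eq:PC1}) for $X$ and $W$. The main technical point is the joint Markov-chain convergence for $X^{(h)}$, but because $n$ is finite and the rates are bounded Lipschitz this is a routine exercise in semigroup convergence rather than a genuine obstacle.
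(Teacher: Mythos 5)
Your proposal is correct, and it shares the paper's starting point: discretise the spin system as an occupancy process with $C_i = h\lambda_i$, $S_i = 1 - h\mu_i$, check the hypotheses of Theorem~\ref{thm2} (your verification is actually more explicit than the paper's, which simply asserts that the assumptions hold; your observation that convexity of $S_i - C_i$ is exactly concavity of $\lambda_i + \mu_i$ is the right one), and pass to the limit using closure of the lower orthant order under convergence in distribution. Where you genuinely diverge is in how the discrete-time chain is turned into a continuous-time object. The paper subordinates both $X^{\delta}$ and $W^{\delta}$ to a common unit-rate Poisson process, $\widetilde{X}^{\delta}_t = X^{\delta}_{N(\delta^{-1}t)}$; this forces an extra appeal to closure of the lower orthant order under mixtures \cite[Theorem 6.G.3(e)]{SS:07}, but the payoff is that $\widetilde{X}^{\delta}$ is exactly a continuous-time Markov chain (uniformization), so convergence to $X$ reduces to convergence of jump rates on a finite state space, while convergence of $\widetilde{W}^{\delta}$ to $W$ is handled by a random-time-change representation and a Gronwall estimate. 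Your deterministic embedding $t \mapsto \lfloor t/h\rfloor$ avoids the mixture step entirely and replaces the Poisson machinery with the elementary bound $\|(I + hQ + O(h^2))^{\lfloor t/h\rfloor} - e^{tQ}\| = O(h)$ together with the Markov property for joint laws, and with convergence of the Euler iterates $p^{(h)}$ for the $W$ side; since $n$ is finite and the rates are bounded and Lipschitz, all of this is routine, as you say. Your route is arguably the more elementary of the two; the paper's buys a process-level continuous-time approximation (which is of independent interest given \cite{MH:21}) at the cost of the Poissonization and Gronwall apparatus. Either way the conclusion (\ref{Eq:PC1}) follows for arbitrary real times, since the finite collection $\{t_{i,j}\}$ is handled by finite-dimensional convergence.
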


\begin{proof}
For $ \delta > 0 $ sufficiently small, let $ (X^{\delta}, t \in \mathbb{N}) $ be the occupancy process with 
\[
C_{i}(x) = \delta \lambda_{i} (x), \quad \text{and} \quad S_{i}(x) = 1 - \delta \mu_{i}(x).
\]
The assumptions of Theorem \ref{thm2} are satisfied by $ X^{\delta}$. Let $(W^{\delta}, t \in \mathbb{N})$ be the corresponding independent 
site approximation (\ref{Eq:model3}) so $ X^{\delta} $ is smaller than $W^{\delta} $ in the lower orthant order. Let $ (N, t> 0)$ be a unit rate 
Poisson process independent of $ X^{\delta} $ and $W^{\delta}$. Define the continuous time process $ (\widetilde{X}^{\delta}, t \in \mathbb{R}_{+}) $ 
by $ \widetilde{X}^{\delta}_{t} := X^{\delta}_{N(\delta^{-1}t)}$ and $ (\widetilde{W}^{\delta}, t \in \mathbb{R}_{+}) $ by 
$ \widetilde{W}^{\delta}_{t} := W^{\delta}_{N(\delta^{-1}t)}$. Then $\widetilde{X}^{\delta} $ is smaller than $ \widetilde{W}^{\delta}$ in the 
lower orthant order as the lower orthant order is closed under mixtures \cite[Theorem 6.G.3 (e)]{SS:07}. It remains to show $ \widetilde{X}^{\delta} \stackrel{d}{\to} X$ 
and $\widetilde{W}^{\delta} \stackrel{d}{\to} W$ since the lower orthant order is preserved under convergence in distribution \cite[Theorem 6.G.3 (d)]{SS:07}. 

From the uniformization construction \cite[Section 2.1]{Keilson:79}, the process $X^{\delta}$ is a continuous time Markov chain on $\{0,1\}^{n}$ with transition rates:
\[
\begin{split}
q^{\delta}_{X} (x,y) = \delta^{-1} \prod_{i=1}^{n}\left[ \left( \delta \lambda_{i}(x) \right)^{(1-x_{i})(y_{i} - x_{i})_{+}} \right. & \left( 1 - \delta \lambda_{i}(x) \right)^{(1-x_{i})(1-(y_{i} - x_{i})_{+})}   \\
& \times \left. \left( \delta \mu_{i}(x) \right)^{x_{i}(x_{i} -y_{i})_{+}} \left( 1 - \delta \mu_{i}(x) \right)^{x_{i}(1- (x_{i} -y_{i})_{+})} \right], 
\end{split}
\]
for any $ x,\ y \in \{0,1\}^{n}$. As $ \delta \to 0$, $q^{\delta}_{X} $ converges to the transition rates of (\ref{Eq:model4}) and since the 
state space is finite, this is sufficient to show $\widetilde{X}^{\delta} \stackrel{d}{\to} X$ .

The process $ \left(N(\delta^{-1}t), \widetilde{W}^{\delta}_t \right)$ is a continuous time Markov chain on $ \mathbb{N}_{0} \times  \{0,1\}^{n} $ 
with transitions $ (m,w) \to (m+1, w + u) $ for $ u \in \{-1,0,1\}^{n}$ at rate
\[
\begin{split}
\beta_{u}(m,w) = \delta^{-1} \prod_{i=1}^{n} \left[ \left( \delta  \lambda_{i}(p^{\delta}_{m}) \right)^{(1-w_{i})(u_{i})_{+}} \right.  & \left( 1 - \delta \lambda_{i}(p^{\delta}_{m}) \right)^{(1-w_{i})(1-(u_{i})_{+})} \\
& \times \left. \left( \delta \mu_{i}(p^{\delta}_{m}) \right)^{w_{i}(-u_{i})_{+}} \left( 1 - \delta \mu_{i}(p^{\delta}_{m}) \right)^{w_{i}(1- (-u_{i})_{+})} \right], 
\end{split}
\]
where 
\[
p^{\delta}_{i,m+1} = \delta \lambda_{i}(p^{\delta}_{m}) (1 - p_{i,m}^{\delta}) + (1- \delta \mu_{i}(p_{m}^{\delta})) p_{i,m}^{\delta}.
\]
We can represent $ \left(N(\delta^{-1}t), \widetilde{W}^{\delta}_{t} \right)$ as a random time change of Poisson processes \cite[Chapter 6, Section 4]{EK:05} 
\begin{align*}
\widetilde{W}^{\delta}_{t} & = X_{0} + \sum_{u \in \{-1,0,1\}^{n}} u\, N_{u} \left( \int^{t}_{0} \beta_{u} (N(\delta^{-1} s), \widetilde{W}^{\delta}_{s})\, ds \right) \\
N(\delta^{-1}t) & = \sum_{u \in \{-1,0,1\}^{n}} N_{u} \left( \int^{t}_{0} \beta_{u} (N(\delta^{-1} s), \widetilde{W}^{\delta}_{s})\, ds \right),
\end{align*}
where the $N_{u} $ are independent unit rate Poisson processes. The process $ (W , t \in \mathbb{R}_{+})$ can be constructed on the same probability 
space as $ \left(N(\delta^{-1}t), \widetilde{W}^{\delta}_{t} \right)$ by representing $(W, t \in \mathbb{R}_{+})$ as
\[
W_{i,t} = X_{i,0} + N_{i}^{+}\left( \int^{t}_{0} (1 - W_{i,s})\, \lambda_{i}(p_{s})\, ds \right) - N_{i}^{-}\left( \int^{t}_{0} W_{i,s}\, \mu_{i}(p_{s})\, ds \right),
\]
 where $ p_{t}$ is the solution to (\ref{Eq:model5}) and identifying $N_{i}^{+}$ and $ N_{i}^{-}$ with the unit rate Poisson processes $N_{u}$ such 
 that $ u_{i} = \pm 1 $ and $ u_{j} = 0 $ for all $ j \neq i$. We now use Gronwall's inequality to show $ \mathbb{E} |W_{i,t} - \widetilde{W}^{\delta}_{i,t} | \to 0 $ as $ \delta \to 0$ 
 for all $ t$ and all $ i$, hence $ \widetilde{W}^{\delta} \stackrel{d}{\to} W $. By the triangle inequality,
\begin{align}
    \mathbb{E} |W_{i,t} - \widetilde{W}^{\delta}_{i,t} | & \leq \mathbb{E} \left|\int^{t}_{0} (1-W_{i,s}) \lambda_{i}(p_{s})\, ds - \int^{t}_{0} (1-\widetilde{W}^{\delta}_{i,s}) \lambda_{i}(p^{\delta}_{N(\delta^{-1}s)})\, ds \right| \nonumber \\
    & + \mathbb{E} \left|\int^{t}_{0} W_{i,s} \mu_{i}(p_{s})\, ds - \int^{t}_{0} \widetilde{W}^{\delta}_{i,s} \mu_{i}(p^{\delta}_{N(\delta^{-1}s)})\, ds \right| \nonumber \\
    & + \sum_{u: \|u\| \geq 2} \mathbb{E} \int^{t}_{0}  \beta_{u}(N(\delta^{-1} s), \widetilde{W}^{\delta} (s))\, ds, \label{Thm4:Eq1}
\end{align}
where $\|u\| = \sum_{i=1}^{n} |u_{i}|$. As the $\lambda_{i} $ and  $\mu_{i}$ are Lipschitz continuous, there exists constants $ C_{1} $ and $ C_{2} $ such that 
\begin{align}
      \lefteqn{\mathbb{E} \left|\int^{t}_{0} \left(1- W_{i,s}\right) \lambda_{i}(p_{s})\, ds - \int^{t}_{0} \left(1 - \widetilde{W}^{\delta}_{i,s}\right) \lambda_{i}(p^{\delta}_{N(\delta^{-1}s)})\, ds \right|} \nonumber \\
       & + \mathbb{E} \left|\int^{t}_{0} W_{i,s} \mu_{i}(p_{s})\, ds - \int^{t}_{0} \widetilde{W}^{\delta}_{i,s} \mu_{i}(p^{\delta}_{N(\delta^{-1}s)})\, ds \right| \nonumber \\
     & \leq C_{1} \int^{t}_{0} \mathbb{E} |W_{i,s} - \widetilde{W}^{\delta}_{i,s} |\, ds   + C_{2} \int^{t}_{0} \mathbb{E} \| p^{\delta}_{N(\delta^{-1}s)} - p_{s} \| \, ds. \label{Thm4:Eq2}
\end{align}
The usual argument for proving convergence of Euler's method \cite[Theorem 212A]{Butcher:16} shows that for any $ t \geq 0 $, there exists a constant $C_{3}$ such that 
\[
\| p^{\delta}_{N(\delta^{-1}t)} - p_{t} \| \leq \frac{e^{C_{3} t} -1 }{C_{3}} \left| \delta N(\delta^{-1} t) - t\right|
\]
so 
\begin{equation} \label{Thm4:Eq3}
\mathbb{E} \| p^{\delta}_{N(\delta^{-1}t)} - p_{t} \|  \leq \delta t \frac{e^{C_{3} t} -1 }{C_{3}}. 
\end{equation}
For any $ u$ such that $ \sum_{i=1}^{n} |u_{i}| \geq 2$, there exists a constant $C_{4}$ such that 
\begin{equation}\label{Thm4:Eq4}
\beta_{u}(N(\delta^{-1} t), \widetilde{W}^{\delta} (t)) \leq C_{4} \delta.
\end{equation}
Combining $(\ref{Thm4:Eq1}) - (\ref{Thm4:Eq4})$ and applying Gronwall's inequality, we see that $ \mathbb{E} |W_{i,t} - \widetilde{W}^{\delta}_{i,t} | \to 0 $ as $ \delta \to 0$ for all $ t$ and all $ i$.

\end{proof}




\providecommand{\bysame}{\leavevmode\hbox to3em{\hrulefill}\thinspace}
\providecommand{\MR}{\relax\ifhmode\unskip\space\fi MR }
\providecommand{\MRhref}[2]{%
  \href{http://www.ams.org/mathscinet-getitem?mr=#1}{#2}
}
\providecommand{\href}[2]{#2}

\begin{acks}
We would like to thank the two reviewers for their helpful comments and suggestions.
\end{acks}


\end{document}